\newtheorem{Theorem}{Theorem}[section]
\newtheorem{Proposition}[Theorem]{Proposition}
\newtheorem{Lemma}[Theorem]{Lemma}
\newtheorem{Corollary}[Theorem]{Corollary}
\theoremstyle{definition}
\newtheorem{Definition}[Theorem]{Definition}
\newtheorem{Remark}[Theorem]{Remark}
\newcommand{\bTheorem}[1]{
	\begin{Theorem} \label{T#1} }
	\newcommand{\eT}{\end{Theorem}}
\newcommand{\bProposition}[1]{
	\begin{Proposition} \label{P#1}}
	\newcommand{\eP}{\end{Proposition}}
\newcommand{\bLemma}[1]{
	\begin{Lemma} \label{L#1} }
	\newcommand{\eL}{\end{Lemma}}
\newcommand{\bCorollary}[1]{
	\begin{Corollary} \label{C#1} }
	\newcommand{\eC}{\end{Corollary}}
\newcommand{\bRemark}[1]{
	\begin{Remark} \label{R#1} }
	\newcommand{\eR}{\end{Remark}}
\newcommand{\bDefinition}[1]{
	\begin{Definition} \label{D#1} }
	\newcommand{\eD}{\end{Definition}}
\newcommand{\Del}{\Delta_x}
\newcommand{\Ds}{\mathbb{D}_x}
\newcommand{\vuB}{\vc{u}_B}
\newcommand{\bFormula}[1]{
	\begin{equation} \label{#1}}
	\newcommand{\eF}{\end{equation}}
\newcommand{\Ov}[1]{\overline{#1}}
\newcommand{\aleq}{\stackrel{<}{\sim}}
\newcommand{\ageq}{\stackrel{>}{\sim}}
\newcommand{\vr}{\varrho}
\newcommand{\vt}{\vartheta}
\newcommand{\vu}{\vc{u}}
\newcommand{\vc}[1]{{\bf #1}}
\newcommand{\Div}{{\rm div}_x}
\newcommand{\Grad}{\nabla_x}
\newcommand{\dx}{\,{\rm d} {x}}
\newcommand{\dt}{\,{\rm d} t }
\newcommand{\intO}[1]{\int_{\Omega} #1 \ \dx}
\newcommand{\D}{{\rm d}}
\newcommand{\ep}{\varepsilon}
\newcommand{\vtB}{\vt_B}
\newcommand{\br}{ \nonumber \\ }
\def\softd{{\leavevmode\setbox1=\hbox{d}%
		\hbox to 1.05\wd1{d\kern-0.4ex{\char039}\hss}}}
\definecolor{Cgrey}{rgb}{0.85,0.85,0.85}
\definecolor{Cblue}{rgb}{0.50,0.85,0.85}
\definecolor{Cred}{rgb}{1,0,0}
\definecolor{fancy}{rgb}{0.10,0.85,0.10}
\newcommand\Cbox[2]{%
	\newbox\contentbox%
	\newbox\bkgdbox%
	\setbox\contentbox\hbox to \hsize{%
		\vtop{
			\kern\columnsep
			\hbox to \hsize{%
				\kern\columnsep%
				\advance\hsize by -2\columnsep%
				\setlength{\textwidth}{\hsize}%
				\vbox{
					\parskip=\baselineskip
					\parindent=0bp
					#2
				}%
				\kern\columnsep%
			}%
			\kern\columnsep%
		}%
	}%
	\setbox\bkgdbox\vbox{
		\color{#1}
		\hrule width  \wd\contentbox %
		height \ht\contentbox %
		depth  \dp\contentbox
		\color{black}
	}%
	\wd\bkgdbox=0bp%
	\vbox{\hbox to \hsize{\box\bkgdbox\box\contentbox}}%
	\vskip\baselineskip%
}
\begin{document}

%%%%%%%%%%%%%%%%%%%%%%%%%%%%%%%%

\title{On strict positivity of the temperature in the Navier--Stokes--Fourier system}

\author{Eduard Feireisl
		\thanks{The work of E.F. was partially supported by the
			Czech Sciences Foundation (GA\v CR), Grant Agreement
			21--02411S. The Institute of Mathematics of the Academy of Sciences of
			the Czech Republic is supported by RVO:67985840. }}

\date{}

\maketitle

\centerline{Institute of Mathematics of the Academy of Sciences of the Czech Republic}

\centerline{\v Zitn\' a 25, CZ-115 67 Praha 1, Czech Republic}

\begin{abstract}
	
We show the Navier--Stokes--Fourier system driven by inhomogeneous 
Dirichlet boundary conditions admits a weak solution with a strictly positive temperature as long as the initial/boundary temperature is bounded below away from zero.

\end{abstract}

%\bigskip

{\bf Keywords:} Navier--Stokes--Fourier system, Dirichlet boundary conditions, temperature minimum principle

%\tableofcontents

\section{Introduction}
\label{i}

In accordance with the basic principles of classical thermodynamics, the \emph{absolute} temperature must remain strictly positive
for any physically admissible process. A rigorous verification of this property for a specific mathematical \emph{model} of a fluid in motion may be, however, rather difficult. One of the frequently used models of viscous and 
heat conducting fluids is the \emph{Navier--Stokes--Fourier system}: 
\begin{align}
\partial_t \vr + \Div (\vr \vu) &= 0, \br 
\partial_t (\vr \vu) + \Div (\vr \vu \otimes \vu) + \Grad p &= 
\Div \mathbb{S} + \vr \vc{g}, \br
\vr c_v \left( \partial_t \vt + \vu \cdot \Grad \vt \right) + \Div \vc{q}
&= \mathbb{S}: \Grad \vu - \vt \frac{\partial p}{\partial \vt} \Div \vu, 
 \label{i1}
 \end{align}
where the viscous stress $\mathbb{S}$ is given by \emph{Newton's rheological law}:
\begin{equation} \label{i2}
	\mathbb{S} = \mu \left(\Grad \vu + \Grad^t \vu - \frac{2}{3} \Div \vu \mathbb{I}   \right) + \eta \Div \vu \mathbb{I},  
\end{equation} 
while the heat flux $\vc{q}$ obeys Fourier's law:
\begin{equation} \label{i3}
	\vc{q} = - \kappa \Grad \vt.
	\end{equation}
	
The state of the fluid at a time $t$ and a spatial position $x$ is characterized by its mass density $\vr = \vr(t,x)$, the (absolute) temperature $\vt = \vt(t,x)$, and the velocity $\vu = \vu(t,x)$.
The system is formally closed by specifying the state equations relating 
the pressure $p = p(\vr,\vt)$ and the specific heat at constant 
volume $c_v = c_v(\vr, \vt)$ to the thermodynamic state variables $(\vr,\vt)$. 

We suppose the fluid is confined to a bounded physical domain $\Omega \subset 
R^3$, and the system of field equations supplemented with the 
boundary conditions 
\begin{equation} \label{i4}
	\vu|_{\partial \Omega} = \vuB,\ 
	\vt|_{\partial \Omega} = \vtB, 
\end{equation}
and the initial conditions 
\begin{equation} \label{i5}
	\vr(0, \cdot) = \vr_0,\ \vt(0, \cdot) = \vt_0,\ \vu(0, \cdot) = \vu_0.
\end{equation}	

Our goal is to establish the minimum principle for the temperature 
\begin{equation} \label{i6}
\inf_{t \in (0,T), x \in \Omega} \vt(t,x) \geq \underline{\vt},\ 
\mbox{where}\ \underline{\vt} 
\ \mbox{depends solely on}\ \min_{\partial \Omega} \vtB, 
\ \min_{\Omega} \vt_0 \ \mbox{and}\ T.
\end{equation}
Relation \eqref{i6} can be seen as an {\it a priori} bound satisfied by any \emph{smooth} solution of the problem. Once the minimum principle is established, we show how to incorporate it in the approximation scheme to obtain 
a \emph{weak solution} to the Navier--Stokes--Fourier system enjoying the same property.

To the best of our knowledge, there are only two results available 
concerning the minimum principle in the class of weak solutions of the Navier--Stokes--Fourier system. Mellet and Vasseur \cite{MeVa3}
(see also Wang and Zuo \cite{WanZuo}) consider the pressure state equation in the form 
\[
p(\vr, \vt) = p_e(\vr) + \vt p_{{\rm th}}(\vr)
\]
adopting the framework of weak solutions introduced in \cite{EF70}. 
Unfortunately, the concept of weak solution developed in \cite{EF70} is  
not strong enough to guarantee the existence of the temperature 
gradient, and, consequently, to establish the weak--strong uniqueness property. A more convenient approach to weak solutions, 
based on introducing the entropy balance as one of the state equations, was introduced in \cite{FeNo6}  and recently developed 
in \cite{ChauFei}, \cite{FeiNovOpen} to accommodate the Dirichlet boundary conditions \eqref{i4}. The pressure state equation 
considered in \cite{FeNo6} takes the form
\begin{equation} \label{i7}
p(\vr, \vt) \approx \vt^{\frac{5}{2}} P \left( \frac{\vr}{ \vt^{\frac{3}{2}}}\right) + a \vt^4,\ a > 0,
\end{equation}
where $a \vt^4$ is the so--called radiation pressure that plays a crucial role in the existence theory. Motivated by \cite{FeNo6}, 
Baer a Vasseur \cite{BaeVas} established the temperature minimum principle for the pressure state equation in the form 
\[
p(\vr, \vt) \approx \vt^{\frac{5}{2}} P \left( \frac{\vr}{ \vt^{\frac{3}{2}}}\right).
\]
Unfortunately, as pointed out in \cite{BaeVas}, this approach does not allow to incorporate the radiation pressure $a \vt^4$ so far 
indispensable to the existence theory.

We show validity of the {\it a priori} bound \eqref{i6} for a large class of state equations including 
\eqref{i7} subject to suitable restrictions compatible with the Third law of thermodynamics. First, 
in Section \ref{T}, we derive a general temperature minimum principle as {\it a priori} bound for a vast class of state equations 
including those relevant to the theory developed in \cite{FeiNovOpen}. In Section \ref{p}, we show how to incorporate the minimum principle in the approximate scheme used in \cite{FeiNovOpen}.
Accordingly, 
we show the existence of a weak solution with the temperature bounded below away from zero in terms of the initial/boundary data. The paper is concluded by 
final remarks in Section \ref{C}. 

\section{Temperature minimum principle as {\it a priori} bound}
\label{T} 

Our goal is to identify the class of state equations, for which the Navier--Stokes--Fourier system admits 
the temperature minimum principle \eqref{i6}. The key quantities are 
\begin{align}
\mbox{the bulk viscosity coefficient}\ \eta &= \eta(\vt),\br 
\mbox{the heat conductivity coefficient}\ \kappa &= \kappa(\vt),\br 
\mbox{the state equations}\ p = \vt^{\frac{5}{2}} P \left( \frac{\vr}{ \vt^{\frac{3}{2}}}\right) + \frac{a}{3} \vt^4,\  
c_v &= \frac{9}{4}\frac{ \frac{5}{3} P(Z) - P'(Z) Z}{Z} + \frac{4a}{\vr} \vt^3,\ \mbox{where}\ Z= \frac{\vr}{ \vt^{\frac{3}{2}}},
\label{T1}
\end{align}
cf. \cite[Chapter 4]{FeiNovOpen}. Note that \eqref{T1} includes the standard Boyle--Mariotte law corresponding to $P(Z) = Z$.

\subsection{Desired properties of the state equation}

We suppose $\kappa = \kappa(\vt)$ is a continuously differentiable function of $\vt$, 
\begin{equation} \label{T2}
0 < \underline{\kappa} \ \vt \kappa(\vt) \leq \mathcal{K}(\vt) \equiv \int_0^\vt \kappa(s) \ \D s 
\leq \Ov{\kappa}\ \vt \kappa(\vt) \ \mbox{for any}\ \vt \geq 0
\end{equation}
for some positive constants $\underline{\kappa}$, $\Ov{\kappa}$.

As for the state equation, first observe 
\begin{equation} \label{T3}
0 < \underline{e} \ \vr c_v(\vr, \vt) \leq 
\frac{\partial p(\vr, \vt)}{\partial \vt} \leq 	\underline{e} \ \vr c_v(\vr, \vt)
\ \mbox{for all}\ \vr, \vt > 0
	\end{equation}
whenever $a \geq 0$. Here, specifically, $\underline{e} = \frac{1}{3}$, $\Ov{e} = \frac{2}{3}$.

The last property we desire imposes some restrictions on the function $P$ in \eqref{T1}:
\begin{equation} \label{T4}
\vr c_v (\vr, \vt) \leq C_v (\vt) \ \mbox{for all}\ \vr, \vt > 0, 
\end{equation}
where $C_v$ depends only on the temperature.
Indeed, to make \eqref{T4} compatible with \eqref{T1}, we assume 
\begin{equation} \label{T5}
0 < \frac{5}{3}P(Z) - P'(Z)Z \leq \Ov{P} \ \mbox{for all}\ Z > 0, 
	\end{equation}
yielding 
\[
\vr c_v (\vr, \vt) \leq \frac{9}{4} \Ov{P} \vt^{\frac{3}{2}} + 4 a \vt^3.
\]

Property \eqref{T5} is intimately related to the Third law of thermodynamics, 
namely,
\[
s(\vr, \vt) \to 0 \ \mbox{as}\ \vt \to 0,
\]
where $s$ denotes the entropy, cf. Belgiorno 
\cite{BEl1}, \cite{BEL2}. Indeed the entropy associated to \eqref{T1} by Gibbs' relation
\[
\vt Ds = D e + p D\left(\frac{1}{\vr}\right)
\]
reads 
\[
s(\vr, \vt) = S(Z) + \frac{4a}{3} \frac{\vt^3}{\vr},\ 
S'(Z) = - \frac{3}{2} \frac{ \frac{5}{3}P(Z) - P'(Z)Z  }{Z^2},\ Z = \frac{\vr}{\vt^{\frac{3}{2}}};
\]
whence $S(Z) \to 0$ as $Z \to \infty$ whenever \eqref{T5} holds.

\subsection{Minimum principle}

Our goal is to apply the standard minimum principle for parabolic equations to the internal energy 
balance
\[
\vr c_v \left( \partial_t \vt + \vu \cdot \Grad \vt \right) - \Div (\kappa(\vt) \Grad \vt)
= \mathbb{S}: \Grad \vu - \vt \frac{\partial p}{\partial \vt} \Div \vu,\ 
\vt(0, \cdot) = \vt_0,\ \vt|_{\partial \Omega} = \vtB.
\]

First, introducing $\Theta = \int_0^\vt \kappa(s) \ \D s$ we obtain
\begin{equation} \label{T6}
\partial_t \Theta + \vu \cdot \Grad \Theta  - \frac{\kappa(\vt)}{\vr c_v(\vr, \vt)} \Del \Theta 
\geq \frac{ \kappa (\vt) \eta(\vt) }{\vr c_v} |\Div \vu|^2 - \Theta \frac{\kappa(\vt)\vt}{\Theta} (\vr c_v(\vr, \vt))^{-1}\frac{\partial p}{\partial \vt} \Div \vu
	\end{equation}

Next, using the hypotheses \eqref{T2}--\eqref{T4} we deduce 
\begin{align}
\frac{ \kappa (\vt) \eta(\vt) }{\vr c_v(\vr, \vt)} |\Div \vu|^2 - &\Theta \frac{\kappa(\vt)\vt}{\Theta} (\vr c_v(\vr, \vt))^{-1}\frac{\partial p}{\partial \vt} \Div \vu
\br &\geq \frac{ \kappa (\vt) \eta(\vt) }{C_v(\vt)} |\Div \vu|^2 - \Theta \frac{\Ov{\kappa}}{\underline{\kappa}} \underline{e} |\Div \vu|
\nonumber
\end{align}
Thus, imposing the final stipulation 
\begin{equation} \label{T7}
\frac{ \kappa (\vt) \eta(\vt) }{C_v(\vt)} \geq \Lambda > 0 \ \mbox{for all}\ \vt > 0,
\end{equation}
we may infer infer that 
\begin{equation} \label{T8}
	\partial_t \Theta + \vu \cdot \Grad \Theta  - \frac{\kappa(\vt)}{\vr c_v(\vr, \vt)} \Del \Theta 
	\geq  - M \Theta^2,\ \Theta(0, \cdot) = \mathcal{K}(\vt_0),\ \Theta|_{\partial \Omega} = 
	\mathcal{K}(\vtB), 
\end{equation}
where $M$ depends only on the structural properties of the state equations. 

To conclude, we introduce a suitable subsolution to \eqref{T8}, specifically $Y = Y(t)$,
\[
\frac{\D}{\dt} Y(t) = - M Y^2,\ Y(0) = Y_0 = \frac{1}{2}\min\{ \min_\Omega \mathcal{K}(\vt_0),\ 
\min_\Omega	\mathcal{K}(\vtB) \},
\]
meaning
\begin{equation} \label{T9}
Y(t) = \left( \frac{1}{Y_0} + M t\right)^{-1}.
\end{equation}
Using the standard parabolic comparison theorem we may infer that 
\[
\Theta (t,x) > Y(t) \ \mbox{for all}\ t \geq 0,\ x \in \Omega.
\]

We have shown the following result. 

\begin{Proposition} [{\bf Temperature minimum principle}] \label{TT1}

Let the functions $\kappa, \eta \in C^1( 0, \infty)$, 
$p \in C^1(0, \infty)^2$, $c_v \in C(0, \infty)^2$ satisfy the structural restrictions \eqref{T2}--\eqref{T4}. In addition, 
suppose there is $\Lambda$ such that 
\begin{equation} \label{cond}
\frac{\kappa(\vt) \eta(\vt)}{\vr c_v(\vr, \vt)} \geq \Lambda > 0 
\ \mbox{for all}\ \vr > 0, \vt > 0.
\end{equation}

Let $\Omega \subset R^3$ be a bounded domain of class
$C^{2 + \nu}$. 
Let 
\[
\vt \in C([0, T] \times \Ov{\Omega}),\ 
\partial_t \vt \in L^2((0,T) \times \Omega),\ 
\vt \in L^2(0,T; W^{2,2}(\Omega)), \vt > 0,
\]
satisfy
\begin{align}
\vr c_v (\vr, \vt) \left( \partial_t \vt + \vu \cdot \Grad \vt \right) - \Div (\kappa(\vt) \Grad \vt)
&= \mathbb{S}: \Grad \vu - \vt \frac{\partial p (\vr, \vt)}{\partial \vt} \Div \vu \ \mbox{a.a. in}\ (0,T) \times \Omega, \br 
\vt(0, \cdot) &= \vt_0,\ \vt|_{\partial \Omega} = \vtB,
\nonumber
\end{align}
where $\vr \in C([0,T] \times \Ov{\Omega})$, $\vr > 0$, 
$\vu \in C([0,T] \times \Ov{\Omega}; R^d)$, $\Grad \vu \in C([0,T] \times \Ov{\Omega}; R^{d \times d})$.

Then there is $M$ depending only on the structural constants 
$\underline{\kappa}$, $\Ov{\kappa}$, $\underline{e}$, $\Ov{e}$, $\Lambda$ such that 
\begin{align}
\vt(t, x) &> \mathcal{K}^{-1} \left( \left( \frac{1}{Y_0} + M t \right)^{-1} \right),\ 
Y_0 = \frac{1}{2}\min\{ \min_\Omega \mathcal{K}(\vt_0),\ 
\min_{\partial \Omega \times (0, \tau)} 	\mathcal{K}(\vtB) \}, \br 
&\mbox{for a.a.}\ (t,x) \in (0,T) \times \Omega.
\nonumber
\end{align}

	\end{Proposition}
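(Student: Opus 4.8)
The plan is to establish the differential inequality \eqref{T8} rigorously under the stated regularity hypotheses and then invoke a parabolic comparison principle. First I would set $\Theta = \mathcal{K}(\vt) = \int_0^\vt \kappa(s)\,\D s$, which is a $C^1$ increasing function of $\vt$; by the chain rule, using the regularity $\vt \in C([0,T]\times\Ov\Omega)\cap L^2(0,T;W^{2,2}(\Omega))$ with $\partial_t\vt\in L^2$, one gets $\partial_t\Theta = \kappa(\vt)\partial_t\vt$, $\Grad\Theta = \kappa(\vt)\Grad\vt$, and $\Del\Theta = \kappa(\vt)\Del\vt + \kappa'(\vt)|\Grad\vt|^2$. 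Substituting into the internal energy balance, the term $-\Div(\kappa(\vt)\Grad\vt) = -\Del\Theta$ (note $\Div(\kappa(\vt)\Grad\vt) = \Del\Theta$ exactly), so dividing through by $\vr c_v(\vr,\vt)>0$ yields
\[
\partial_t\Theta + \vu\cdot\Grad\Theta - \frac{\kappa(\vt)}{\vr c_v(\vr,\vt)}\Del\Theta = \frac{\kappa(\vt)}{\vr c_v}\left(\mathbb{S}:\Grad\vu - \vt\frac{\partial p}{\partial\vt}\Div\vu\right)
\]
holding a.e. in $(0,T)\times\Omega$.

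Next I would bound the right-hand side from below. The viscous dissipation $\mathbb{S}:\Grad\vu$ is, by \eqref{i2}, equal to $\mu|\Grad\vu + \Grad^t\vu - \tfrac23\Div\vu\,\mathbb{I}|^2/2 + \eta|\Div\vu|^2 \geq \eta(\vt)|\Div\vu|^2$ (the shear part being nonnegative), so $\frac{\kappa(\vt)}{\vr c_v}\mathbb{S}:\Grad\vu \geq \frac{\kappa(\vt)\eta(\vt)}{\vr c_v}|\Div\vu|^2 \geq \frac{\kappa(\vt)\eta(\vt)}{C_v(\vt)}|\Div\vu|^2 \geq \Lambda|\Div\vu|^2$ using \eqref{T4} and \eqref{cond}. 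For the pressure work term, write it as $-\Theta\cdot\frac{\kappa(\vt)\vt}{\Theta}\cdot\frac{1}{\vr c_v}\frac{\partial p}{\partial\vt}\cdot\Div\vu$; by \eqref{T2}, $\frac{\kappa(\vt)\vt}{\Theta} = \frac{\kappa(\vt)\vt}{\mathcal{K}(\vt)} \leq \frac{1}{\underline\kappa}$, and by \eqref{T3}, $\frac{1}{\vr c_v}\frac{\partial p}{\partial\vt} \leq \Ov e$, so this term is bounded below by $-\frac{\Ov e}{\underline\kappa}\Theta|\Div\vu|$. (There is a minor typo in \eqref{T3} — the lower and upper constants should be $\underline e$ and $\Ov e$ — but this is immaterial.) Combining, the RHS is $\geq \Lambda|\Div\vu|^2 - \frac{\Ov e}{\underline\kappa}\Theta|\Div\vu|$, and completing the square in $|\Div\vu|$ gives $\geq -\frac{\Ov e^2}{4\Lambda\underline\kappa^2}\Theta^2 =: -M\Theta^2$, with $M$ depending only on $\underline\kappa,\Ov e,\Lambda$ (one can absorb $\Ov\kappa,\underline e$ into a crude version of the estimate if preferred). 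This establishes \eqref{T8}.

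Finally I would run the comparison argument. Let $Y(t) = (Y_0^{-1} + Mt)^{-1}$ solve $\dot Y = -MY^2$, $Y(0) = Y_0$, with $Y_0$ as in the statement; note $Y_0 \leq \min_\Omega\mathcal{K}(\vt_0)$ and $Y_0 \leq \min_{\partial\Omega\times(0,T)}\mathcal{K}(\vtB)$, so $Y$ is strictly below $\Theta$ on the parabolic boundary $(\{0\}\times\Omega)\cup((0,T)\times\partial\Omega)$. Set $w = \Theta - Y$; then $w \in C([0,T]\times\Ov\Omega)$, $\partial_t w\in L^2$, $w\in L^2(0,T;W^{2,2})$, $w$ is positive on the parabolic boundary, and it satisfies the differential inequality
\[
\partial_t w + \vu\cdot\Grad w - \frac{\kappa(\vt)}{\vr c_v}\Del w \geq -M(\Theta^2 - Y^2) = -M(\Theta + Y)w \geq -c\,w \quad\text{where }w<0,
\]
since the coefficient $\Theta+Y$ is bounded on $[0,T]\times\Ov\Omega$ (by continuity of $\Theta$ and boundedness of $Y$). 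The operator $-\frac{\kappa(\vt)}{\vr c_v}\Del$ is uniformly parabolic because $\vr,\vt,\Grad\vu$ are continuous and bounded with $\vr,\vt>0$, hence $\frac{\kappa(\vt)}{\vr c_v}$ is bounded above and below by positive constants; likewise $\vu$ is bounded. A standard weak parabolic comparison (minimum) principle — e.g. testing the inequality for $w$ with $w^- = \min\{w,0\}$ and applying Gronwall, or citing the maximum principle for strong solutions in $W^{2,1}_2$ — then forces $w \geq 0$, and strict inequality on $\partial\Omega\times(0,T)$ propagates to give $\Theta > Y$ in the interior (alternatively one argues with $Y_0$ replaced by $Y_0 - \delta$ and lets $\delta\to0$, or simply notes strictness since $Y_0$ is half the minimum). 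Inverting the strictly increasing $C^1$ function $\mathcal{K}$ yields $\vt(t,x) > \mathcal{K}^{-1}(Y(t))$ for a.a. $(t,x)$, which is the claim.

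I expect the main obstacle to be the rigorous justification of the comparison principle at the stated (low) regularity: $\vt$ is only a strong solution in $L^2(0,T;W^{2,2})$ with $L^2$ time derivative, not classical, so one cannot quote the textbook classical maximum principle verbatim. The cleanest route is the energy method: test the differential inequality for $w$ against $w^-\in L^2(0,T;W^{1,2}_0)$ (legitimate since $w^-$ vanishes on the parabolic boundary where $w>0$), integrate by parts in the diffusion term to get a nonnegative contribution $\int\frac{\kappa(\vt)}{\vr c_v}|\Grad w^-|^2$, handle the transport term $\int\vu\cdot\Grad w\, w^-$ by noting $\Div\vu$ is bounded (or absorbing it), and close with Gronwall on $\tfrac12\frac{\D}{\dt}\int|w^-|^2$. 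Care is needed that the coefficient $\frac{\kappa(\vt)}{\vr c_v}$ multiplying $\Del w$ is merely continuous, not Lipschitz, but this suffices for the test-function computation. The structural estimates \eqref{T2}–\eqref{T7} are then routine algebra as sketched above.
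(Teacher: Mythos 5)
Your derivation of the differential inequality \eqref{T8} and the reduction to the Riccati comparison function $Y(t)=(Y_0^{-1}+Mt)^{-1}$ is exactly the paper's argument (including the correct reading of the typos in \eqref{T3} and in the constant appearing after \eqref{T6}), so the structural half of the proof matches. The divergence is in how the comparison principle is justified at the stated regularity, and here the route you advertise as ``cleanest'' has a genuine gap: the equation is in \emph{non-divergence} form, and testing
\[
-\int_\Omega \frac{\kappa(\vt)}{\vr c_v(\vr,\vt)}\,\Del w\; w^- \dx
\]
cannot be integrated by parts to produce the nonnegative term $\int \frac{\kappa}{\vr c_v}|\Grad w^-|^2$ without generating the commutator $\int w^-\,\Grad w\cdot\Grad\bigl(\kappa(\vt)/(\vr c_v)\bigr)$, which is not controlled because $\vr$ and $\vt$ are merely continuous, hence the coefficient need not have a gradient at all. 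So the Gronwall/$w^-$ computation does not close as written. Your fallback --- citing the maximum principle for strong $W^{2,1}_2$ solutions of non-divergence parabolic equations with bounded measurable, uniformly elliptic coefficients --- is the correct fix, and is in substance what the paper does: it first linearizes the inequality (setting $\chi=e^{Kt}V$ with $V=\mathcal{K}(\vt)-Y$ so that the right-hand side becomes a nonnegative $g\in L^2$), then approximates $\vu$, the diffusion coefficient and $g$ by smooth functions, applies the classical strong minimum principle, and transfers the conclusion back via uniqueness of strong solutions in the class $\partial_t\chi\in L^2$, $\chi\in L^2(0,T;W^{2,2}(\Omega))$. If you intend to keep the energy-method presentation you must either rewrite the principal part in divergence form from the start (working with $\Div(\kappa(\vt)\Grad\vt)$ before dividing by $\vr c_v$, which changes the structure of the lower-order terms) or switch to the Alexandrov--Krylov type argument; otherwise adopt the paper's smoothing argument.
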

	
\begin{proof}
	
	Repeating the arguments presented at the beginning of this section
we arrive at the inequality 	
\begin{equation} \label{T10}
\partial_t V(t) + \vu \cdot \Grad V - \frac{\kappa(\vt)}{\vr c_v(\vr, \vt)} \Del V 
\geq - M (\Theta + Y) V \ \mbox{a.a. in}\ (0,T) \times \Omega, 
\end{equation}
for $V = \mathcal{K} (\vt ) - Y$, $Y$ given by \eqref{T9}, satisfying 
\[
V(0, \cdot) > 0 ,\ V|_{\partial \Omega} > 0 .
\]
Consequently, the proof of Proposition \ref{TT1} reduces to showing $V \geq 0$.

Since $Y$ and $\Theta = \mathcal{K}(\vt)$ are bounded, there is a constant $K$ such that 
\[
	\partial_t V(t) + \vu \cdot \Grad V - \frac{\kappa(\vt)}{\vr c_v(\vr, \vt)} \Del V 
	\geq - K V \ \mbox{a.a. in}\ (0,T) \times \Omega, 
\]
as long as $V \geq 0$. Thus introducing $\chi = \exp(Kt) V$ we get 
\[
\partial_t \chi(t) + \vu \cdot \Grad \chi - \frac{\kappa(\vt)}{\vr c_v(\vr, \vt)} \Del \chi 
\geq 0 \ \mbox{a.a. in}\ (0,T) \times \Omega, \ \chi(0, \cdot) > 0, 
\ \chi|_{(0,T) \times \partial \Omega } > 0 
\]
as long as $\chi \geq 0$. If $\chi$ was regular, the result would follow from the Strong minimum principle stating $\chi > 0$ as long as $\chi \geq 0$.

If $\chi$ is not regular but still a strong solution, we get 
\begin{equation} \label{T11}
\partial_t \chi(t) + \vu \cdot \Grad \chi - \frac{\kappa(\vt)}{\vr c_v(\vr, \vt)} \Del \chi  = g \in \ L^2((0,T) \times \Omega),\ 
g \geq 0, 
\end{equation}
as long as $\chi \geq 0$, 
\begin{equation} \label{T12}
	\chi(0, \cdot) > 0, 
\ \chi|_{(0,T) \times \partial \Omega } > 0. 
\end{equation}
Thus desired conclusion follows by approximating $\vu$, the diffusion 
coefficient $\frac{\kappa(\vt)}{\vr c_v(\vr, \vt)}$, and $g$ by smooth functions. Indeed the problem \eqref{T11}, \eqref{T12} admits a unique solution in the class  
\[
\partial_t \chi \in L^2((0,T) \times \Omega),\ 
\chi \in L^2(0,T; W^{2,2}(\Omega));
\] 
whence the result can be extended to strong solutions.

	\end{proof}	

\section{Positivity of the temperature in the weak solutions to the Navier--Stokes--Fourier system}
\label{p}

Our goal is to show how the temperature minimum principle established in the preceding section can be incorporated in the construction 
of weak solutions to the Navier--Stokes--Fourier system. 

\subsection{Weak solutions}
\label{WS} 

A suitable concept of \emph{weak} solution to the Navier--Stokes--Fourier system \eqref{i1}--\eqref{i3}, with the Dirichlet boundary data \eqref{i4} was introduced in 
\cite{ChauFei}, \cite[Chapter 12]{FeiNovOpen}. For the sake of simplicity, we 
suppose $\vuB = 0$. Note, however, that the role of the velocity in the subsequent analysis is marginal and more complicated boundary behaviour of $\vu$ can be handled.

First, extend the boundary data $\vtB$ inside $\Omega$. We suppose the extension is continuously differentiable and satisfies
\begin{equation} \label{p1}
\inf_{(0,T) \times \Omega} \vtB \geq 
\underline{\vt} > 0.
 \end{equation}
The temperature and velocity components of a weak solution belong to the class 
\begin{equation} \label{p2} 
 \vu \in L^2((0,T); W^{1,2}_0(\Omega; R^3)),\ 
 (\vt - \vtB) \in L^2((0,T); W^{1,2}_0(\Omega)),\ 
 \vt > 0 \ \mbox{a.a. in}\ (0,T) \times \Omega.
\end{equation}
Here and hereafter, we always tacitly assume $\vu$ has be extended to be zero outside $\Omega$.

The weak formulation is based on the entropy rather than internal energy balance. Specifically, 
\begin{align} 
\partial_t b(\vr) + \Div ((b(\vr) \vu ) + \Big(b'(\vr) \vr - b(\vr) \Big) \Div \vu &= 0\ \mbox{in}\ \mathcal{D}'((0,T) \times R^3))
\label{p3}\\
&\mbox{for any}\  b \in C^2[0,\infty),\ b'' \in C_c[0, \infty); \br 
\label{p4}
\partial_t (\vr \vu) + \Div (\vr \vu \otimes \vu) + \Grad p &= 
\Div \mathbb{S} + \vr \vc{g} \ \mbox{in}\ \mathcal{D}'((0,T) \times \Omega; R^3); \\
\partial_t (\vr s) + \Div (\vr s \vu) - \Div \left(\frac{\kappa}{\vt} \Grad \vt \right) &= \sigma \ \mbox{in}\ \mathcal{D}'((0,T) \times \Omega), \label{p5} \\
&\mbox{where} \ \sigma \ \mbox{is a measure on}\ [0,T] \times \Ov{\Omega}, \br
\sigma &\geq \frac{1}{\vt} 
\left( \mathbb{S}(\Grad \vu) : \Grad \vu - \frac{\kappa |\Grad \vt|^2 }{\vt} \right). \label{p6}
\end{align}	
Possible loss of information due to the inequality in \eqref{p6} 
is compensated by imposing the \emph{ballistic energy inequality}:
\begin{align}
\frac{\D }{\dt} &\intO{ \left( \frac{1}{2} \vr |\vu|^2 + \vr e(\vr, \vt)  - \vtB \vr s(\vr, \vt) \right) }	+ \int_{\Ov{\Omega}} \vtB 
\ \D \sigma \br
&\leq - \intO{ \left( \vr s (\vr, \vt) \partial_t \vtB + \vr s (\vr, \vt) \vu \cdot \Grad \vtB - \frac{{\kappa}}{ \vt} \Grad \vt \cdot \Grad \vtB  - \vr \vc{g} \cdot \vu       \right)      }
\ \mbox{in}\ \mathcal{D}'(0,T). 
	\label{p7a}
	\end{align}
The reader may consult \cite[Chapter 12]{FeiNovOpen} for the physical background of the above formulation as well as the properties of the weak solutions.  

\subsection{Constitutive relations}

The mathematical theory developed in \cite{FeiNovOpen} is based on certain physically grounded restrictions imposed on the state equations as well as the transport coefficients. Specifically, we suppose (cf. \eqref{T1}):
\begin{align}
p(\vr, \vt) &= 	\vr \vt \frac{ P \left( Z \right) }{Z}   + \frac{a}{3} \vt^4, \  
e(\vr, \vt) = \frac{3}{2} \vt \frac{ P \left( Z \right) }{Z} + \frac{a}{\vr} \vt^4     , \ a > 0,\ Z = \frac{\vr}{\vt^{\frac{3}{2}}  }, \label{p7} \\ 
P &\in C^1[0, \infty) ,\ P'(Z) > 0 \ \mbox{for all}\ Z \geq 0,\ 
{ \frac{5}{3} P(Z) - P'(Z) Z} > 0 \ \mbox{for all}\ Z > 0,
\label{p8} \\ 
s(\vr, \vt) &= S(Z) + \frac{4a}{3} \frac{\vt^3}{\vr},\ 
S'(Z) = - \frac{3}{2} \frac{ \frac{5}{3} P(Z) - P'(Z) Z}{Z^2},\ 
\label{p9}\\ 
\ \frac{P(Z)}{Z^{\frac{5}{3}}} &\to p_\infty > 0,  
\ 
S(Z) \to 0 \ \mbox{as}\ Z \to \infty, \label{p10}
	\end{align}  
together with 
\begin{align} 
\mu,\ \eta \in C^1[0, \infty),\ 
0 < \underline{\mu}(1 + \vt) &\leq \mu(\vt) \leq 
\Ov{\mu}(1 + \vt),\ |\mu' (\vt)| \aleq 1,
	\label{p10a}\\ 
	0  &\leq \eta(\vt) \leq 
	\Ov{\eta}(1 + \vt), \label{p11}\\
0 < \underline{\kappa} (1 + \vt^\beta) &\leq 
\kappa (\vt) \leq \Ov{\kappa} (1 + \vt^\beta),\ \beta > 6.
\label{p12}	
	\end{align}
	
In accordance with \eqref{p7}, we have 
\begin{equation} \label{p13}	
c_v (\vr, \vt) = \frac{\partial e(\vr, \vt)}{\partial \vt} = 
\frac{9}{4}	\frac{ \frac{5}{3} P(Z) - P'(Z) Z}{Z} + \frac{4a}{\vr} \vt^3, \ Z = \frac{\vr}{\vt^{\frac{3}{2}}  }, 
\end{equation}
cf. \eqref{T1}.

Going back to Proposition \ref{TT1}, in particular the hypotheses
\eqref{T2}--\eqref{T4}, 
we observe that \eqref{T2} holds whenever $\vt > 1$ and $\kappa$ satisfies
\eqref{p12}. As for the limit regime $\vt \to 0$, we replace the lower bound in \eqref{p12} by an assumption relevant to gases, namely $\kappa (\vt) \approx \sqrt{\vt}$. Accordingly, we modify \eqref{p12} 
postulating
\begin{equation} \label{p14}
0 < \underline{\kappa}\left( \vt^{\frac{1}{2}} + \vt^\beta\right)	
\leq \kappa (\vt) \leq \Ov{\kappa} \left( \vt^{\frac{1}{2}} + \vt^\beta\right),\ \beta > 6.
\end{equation}

As stated in \eqref{T5}, the properties \eqref{T3}, \eqref{T4} 
are guaranteed by imposing an extra hypothesis on the structural function $P$, namely 
\begin{equation} \label{p15}
0 < \frac{5}{3} P(Z) - P'(Z) Z < \Ov{P} \ \mbox{for all}\ Z > 0,
\end{equation}
yielding 
\begin{equation} \label{p16}
	\vr c_v (\vr, \vt) \leq \frac{9}{4} \Ov{P} \vt^{\frac{3}{2}} + 
	4a \vt^3.
\end{equation}

Finally, condition \eqref{cond} reads 
\[
(\vt^{\frac{1}{2}}  + \vt^\beta ) \eta (\vt) \ageq \vt^{\frac{3}{2}} 
+ \vt^3. 
\]
Consequently, we replace \eqref{p11} by a more restrictive stipulation 
\begin{equation} \label{p19}
0 < \underline{\eta} \min\{ 1;\vt \} \leq \eta (\vt) \leq \Ov{\eta} (1 + \vt).
\end{equation}

\subsection{Approximation scheme}

The existence of global--in--time weak solutions to the Navier--Stokes--Fourier system was proved in \cite[Chapter 12]{FeiNovOpen} by means of a multi--level approximation scheme: 
\begin{align} 
\partial_t \vr + \Div (\vr \vu) &= \ep \Del \vr,\ \Grad \vr \cdot \vc{n}|_{\partial \Omega} = 0; 	
	\label{p20} \\ 
\partial_t (\vr \vu) + \Div (\vr \vu \otimes \vu) + \Grad p_\delta (\vr) + \Grad p(\vr, \vt) &= \Div \mathbb{S}(\vt, \Grad \vu) + \vr \vc{g} + \ep \Del (\vr \vu),\ \vu|_{\partial \Omega} = 0;	
	\label{p21}\\ 
\vr c_v(\vr, \vt) \Big(\partial_t \vt + \vu \cdot \Grad \vt \Big) - 
\Div(\kappa (\vt) \Grad \vt) &= \mathbb{S}(\vt, \Grad \vu) : \Ds \vu 
- \vt \frac{\partial p(\vr, \vt)}{\partial \vt} \Div \vu \br 
&+ \ep |\Grad \vr|^2 \left( 2 \frac{\partial e(\vr, \vt)}{\partial \vr} + \vr \frac{\partial^2 e(\vr, \vt)}{\partial \vr^2} \right),\ 
\vt|_{\partial \Omega} = \vtB,
\label{p22}	
	\end{align} 
where 
\begin{equation} \label{p23}
	p_\delta (\vr) = \delta \vr^\Gamma,\ \delta > 0, \ \Gamma >> 1.
	\end{equation}
In \cite[Chapter 12]{FeiNovOpen}, the momentum equation \eqref{p21} is solved by means of a Galerkin type approximation but this is not essential here. 
The system \eqref{p20}--\eqref{p22} is nothing other than the artificial viscosity approximation of the equation of continnuity and momentum equation, 
supplemented with a slightly modified internal energy balance.

We show that \eqref{p20}--\eqref{p22} is a consistent approximation of the system \eqref{p3}--\eqref{p6}. To this end, we derive the approximate entropy and ballistic energy balance and show they are satisfied with a consistency error vanishing for $\ep \to 0$, $\delta \to 0$. 

Let us start with the approximate entropy balance. After a bit lengthy 
but straightforward manipulation, we deduce from \eqref{p20}, \eqref{p22}: 
\begin{align}
	\partial_t (\vr s(\vr, \vt)) &+ \Div (\vr s(\vr, \vt) \vu) - 
	\Div \left(\frac{\kappa}{\vt} \Grad \vt \right) = 
	\frac{1}{\vt} \left( \mathbb{S}(\vt, \Grad \vu): \Ds \vu + 
	\frac{\kappa |\Grad \vt|^2}{\vt}\right) \br
	&+ \frac{\ep}{\vt} |\Grad \vr|^2 \left( 2 \frac{\partial e(\vr, \vt)}{\partial \vr} + \vr \frac{\partial^2 e(\vr, \vt)}{\partial \vr^2} \right) + \ep \Del \vr \left(\vr \frac{\partial s(\vr, \vt)}{\partial \vr} + s(\vr, \vt)\right).
	\label{p24}
	\end{align}
	
Similarly, we compute 
\begin{align}
	\partial_t (\vr e(\vr, \vt)) &+ \Div (\vr e(\vr, \vt) \vu) - 
	\Div \left(\kappa \Grad \vt \right) = \mathbb{S}(\vt, \Grad \vu):\Ds \vu - p(\vr, \vt) \Div \vu \br 
&+ \ep	|\Grad \vr|^2 \left( 2 \frac{\partial e(\vr, \vt)}{\partial \vr} + \vr \frac{\partial^2 e(\vr, \vt)}{\partial \vr^2} \right) 
+\ep \Del \vr \left( e(\vr, \vt ) + \vr \frac{\partial e(\vr, \vt)}{\partial \vr }\right).
\label{p25}
\end{align}

Finally, multiplying \eqref{p21} by $\vu$ and integrating by parts, we get 
\begin{align} 
\frac{\D}{\dt} \intO{\left( \frac{1}{2} \vr |\vu|^2 + \frac{\delta}{\Gamma - 1} \vr^\Gamma \right) } &+ \ep 
\intO{ \Big( \vr |\Grad \vu|^2 + \delta \Gamma \vr^{\Gamma - 2}|\Grad \vr|^2 \Big) } = \intO{ \vr \vc{g} \cdot \vu } \br
&+ \intO{ \Big(p(\vr, \vt) \Div \vu - \mathbb{S}(\vt, \Grad \vu):\Ds \vu  \Big) }. \label{p26}
\end{align}	

Integrating \eqref{p25} and adding the resulting expression to 
\eqref{p26} yields
\begin{align}
	\frac{\D}{\dt} &\intO{\left( \frac{1}{2} \vr |\vu|^2 + \vr e(\vr, \vt) +  \frac{\delta}{\Gamma - 1} \vr^\Gamma \right) } + 
	\ep \intO{ \Big( \vr |\Grad \vu|^2 + \delta \Gamma \vr^{\Gamma - 2}|\Grad \vr|^2 \Big) } \br
	&=  \intO{ \vr \vc{g} \cdot \vu } + \int_{\partial \Omega} 
	\kappa (\vtB) \Grad \vt \cdot \vc{n} \ \D S_x \br&+ \ep \intO{ \left[ 	|\Grad \vr|^2 \left( 2 \frac{\partial e(\vr, \vt)}{\partial \vr} + \vr \frac{\partial^2 e(\vr, \vt)}{\partial \vr^2} \right) 
		+ \Del \vr \left( e(\vr, \vt ) + \vr \frac{\partial e(\vr, \vt)}{\partial \vr }\right) \right]
	}.
	\label{p27}
	\end{align}
Integrating by parts the last integral, we obtain 
\begin{align}
&\intO{ \left[ 	|\Grad \vr|^2 \left( 2 \frac{\partial e(\vr, \vt)}{\partial \vr} + \vr \frac{\partial^2 e(\vr, \vt)}{\partial \vr^2} \right) 
	+ \Del \vr \left( e(\vr, \vt ) + \vr \frac{\partial e(\vr, \vt)}{\partial \vr }\right) \right]
} \br 
&= - \intO{ \Grad \vr \cdot \Grad \vt \left( \frac{\partial e(\vr, \vt )}{\partial \vt} + \vr \frac{\partial^2 e(\vr, \vt)}{\partial \vr
	\partial \vt }\right)} 
\nonumber
\end{align}
Thus we conclude
\begin{align}
		\frac{\D}{\dt} &\intO{\left( \frac{1}{2} \vr |\vu|^2 + \vr e(\vr, \vt) +  \frac{\delta}{\Gamma - 1} \vr^\Gamma \right) } + 
	\ep \intO{ \Big( \vr |\Grad \vu|^2 + \delta \Gamma \vr^{\Gamma - 2}|\Grad \vr|^2 \Big) } \br
	&=  \intO{ \vr \vc{g} \cdot \vu } + \int_{\partial \Omega} 
	\kappa (\vtB) \Grad \vt \cdot \vc{n} \ \D S_x
	- \ep  \intO{ \Grad \vr \cdot \Grad \vt \left( \frac{\partial e(\vr, \vt )}{\partial \vt} + \vr \frac{\partial^2 e(\vr, \vt)}{\partial \vr
			\partial \vt }\right)}.
	\label{p28}
	\end{align}
	
Now, using \eqref{p24}, \eqref{p28}, we obtain the approximate ballistic energy inequality 
\begin{align}
	\frac{\D}{\dt} &\intO{\left( \frac{1}{2} \vr |\vu|^2 + \vr e(\vr, \vt) - \vtB \vr s(\vr, \vt) +  \frac{\delta}{\Gamma - 1} \vr^\Gamma \right) } + 
	\ep \intO{ \Big( \vr |\Grad \vu|^2 + \delta \Gamma \vr^{\Gamma - 2}|\Grad \vr|^2 \Big) } \br
	+&\intO{\frac{\vtB}{\vt}\left( \mathbb{S}(\vt, \Grad \vu): \Ds \vu + 
		\frac{\kappa |\Grad \vt|^2}{\vt}\right)  } +\ep \intO{ \frac{\vtB}{\vt} |\Grad \vr|^2 \left( 2 \frac{\partial e(\vr, \vt)}{\partial \vr} + \vr \frac{\partial^2 e(\vr, \vt)}{\partial \vr^2} \right)} \br	
	= &-   \intO{ \left( \vr s (\vr, \vt) \partial_t \vtB + \vr s (\vr, \vt) \vu \cdot \Grad \vtB - \frac{{\kappa}}{ \vt} \Grad \vt \cdot \Grad \vtB  - \vr \vc{g} \cdot \vu       \right)      }
	\br
	&
	- \ep  \intO{ \Grad \vr \cdot \Grad \vt \left( \frac{\partial e(\vr, \vt )}{\partial \vt} + \vr \frac{\partial^2 e(\vr, \vt)}{\partial \vr
			\partial \vt }\right)} 
	+ \ep \intO{\Grad \vtB \cdot \Grad \vr \left(s(\vr, \vt) + 
		\vr \frac{\partial s(\vr, \vt)}{\partial \vr}  \right) }	
		\br& + \ep \intO{\vtB \Grad \vr \cdot 
			\Grad \vt \left(\vr \frac{\partial^2 s(\vr, \vt)}{\partial \vr \partial \vt} + \frac{\partial s(\vr, \vt)}{\partial \vt}\right)}\br  &+ \ep 
			\intO{\vtB |\Grad \vr |^2 \left( 2 \frac{\partial s(\vr, \vt)}{\partial \vr} + \vr \frac{\partial^2 s(\vr, \vt)}{\partial \vr^2} \right) } .
	\label{p29}
\end{align}

Finally, by virtue of Gibbs' relation, 
\[
\frac{\partial s(\vr, \vt)}{\partial \vr } - 
\frac{1}{\vt} \frac{\partial e(\vr, \vt)}{\partial \vr } = - \frac{1}{\vt} \frac{p(\vr, \vt)}{\vr^2},
\]
and, similarly, 
\[
\vr \frac{\partial^2 s(\vr, \vt)}{\partial \vr^2} - 
\frac{\vr}{\vt} \frac{\partial^2 e(\vr, \vt)}{\partial \vr^2} = 
- \frac{1}{\vr \vt} \frac{\partial p(\vr, \vt)}{\partial \vr} + 
2 \frac{1}{\vt} \frac{p(\vr, \vt)}{\vr^2}. 
\]
Consequently, the inequality \eqref{p29} can be rewritten in the form	 
\begin{align}
	\frac{\D}{\dt} &\intO{\left( \frac{1}{2} \vr |\vu|^2 + \vr e(\vr, \vt) - \vtB \vr s(\vr, \vt) +  \frac{\delta}{\Gamma - 1} \vr^\Gamma \right) } + 
	\ep \intO{ \Big( \vr |\Grad \vu|^2 + \delta \Gamma \vr^{\Gamma - 2}|\Grad \vr|^2 \Big) } \br
	+&\intO{\frac{\vtB}{\vt}\left( \mathbb{S}(\vt, \Grad \vu): \Ds \vu + 
		\frac{\kappa |\Grad \vt|^2}{\vt}\right)  } +\ep \intO{ \frac{\vtB}{\vr \vt} |\Grad \vr|^2 \frac{\partial p(\vr, \vt)}{\partial \vr}} \br	
	= &-   \intO{ \left( \vr s (\vr, \vt) \partial_t \vtB + \vr s (\vr, \vt) \vu \cdot \Grad \vtB - \frac{{\kappa}}{ \vt} \Grad \vt \cdot \Grad \vtB  - \vr \vc{g} \cdot \vu       \right)      }
	\br
	&
	- \ep  \intO{ \Grad \vr \cdot \Grad \vt \left( \frac{\partial e(\vr, \vt )}{\partial \vt} + \vr \frac{\partial^2 e(\vr, \vt)}{\partial \vr
			\partial \vt }\right)} 
	+ \ep \intO{\Grad \vtB \cdot \Grad \vr \left(s(\vr, \vt) + 
		\vr \frac{\partial s(\vr, \vt)}{\partial \vr}  \right) }	
	\br& + \ep \intO{\vtB \Grad \vr \cdot 
		\Grad \vt \left(\vr \frac{\partial^2 s(\vr, \vt)}{\partial \vr \partial \vt} + \frac{\partial s(\vr, \vt)}{\partial \vt}\right)}.
	\label{p30}
\end{align}

The inequality \eqref{p30} is of the same form as its counterpart 
in \cite[Chapter 6, Section 6.2]{FeiNovOpen} and gives rise to 
the uniform bounds necessary to perform the limits $\ep \to 0$ and 
$\delta \to 0$. 

\subsection{Strict positivity of the approximate temperature}

Going back to the approximate internal energy balance \eqref{p22}
we may apply the temperature minimum principle established in Proposition \ref{TT1} as soon as 
\begin{equation} \label{p31}	
2 \frac{\partial e(\vr, \vt)}{\partial \vr} + \vr \frac{\partial^2 e(\vr, \vt)}{\partial \vr^2}  \geq 0.
\end{equation}
Since 
\[
2 \frac{\partial e(\vr, \vt)}{\partial \vr} + \vr \frac{\partial^2 e(\vr, \vt)}{\partial \vr^2} = \frac{\partial}{\partial \vr}
\left( e(\vr, \vt) + \vr \frac{\partial e(\vr, \vt)}{\partial \vr}\right),\ e(\vr, \vt) + \vr \frac{\partial e(\vr, \vt)}{\partial \vr} = \frac{3}{2} \vt P'(Z),\ Z = \frac{\vr}{\vt^{\frac{3}{2}}},
\]
we need $P = P(Z)$ to be convex. 

As the lower bound on the temperature established in Proposition \ref{TT1}
depends only on the structural constants in the constitutive equations and the initial/boundary data, the same property will be inherited by any weak solution 
resulting from the limit process $\ep \to 0$, $\delta \to 0$ in the approximation scheme \eqref{p20}--\eqref{p22}. We have shown the following result. 

\begin{Theorem}  \label{TP1}{\bf (Temperature minimum principle for weak solutions).} 
	
	Let $\Omega \subset R^3$ be a bounded domain of class $C^{2 + \nu}$. Suppose the thermodynamic functions $p$, $e$, and $s$ are 
	given by \eqref{p7}--\eqref{p9}, where $P \in C^1[0, \infty)$ is a convex function, 
	\begin{equation}\label{p32}
	P(0) = 0,\ P'(0) > 0, \ 0 < \frac{5}{3}P(Z) - P'(Z)Z < \Ov{P} 
	\ \mbox{for all}\ Z > 0,\ \frac{P(Z)}{Z^{\frac{5}{3}}} \to 
	p_\infty > 0\ \mbox{as}\ Z \to \infty.
		\end{equation}
	Let the transport coefficients satisfy 	
	\begin{align}  
		0 < \underline{\mu}\max \{ 1, \vt \} &\leq \mu(\vt) \leq 
		\Ov{\mu}(1 + \vt),\ |\mu' (\vt)| \aleq 1,
		\br 
		0< \underline{\eta} \min\{ 1, \vt \}  &\leq \eta(\vt) \leq 
		\Ov{\eta}(1 + \vt), \br
		0 < \underline{\kappa}\left( \vt^{\frac{1}{2}} + \vt^\beta\right)	
		&\leq \kappa (\vt) \leq \Ov{\kappa} \left( \vt^{\frac{1}{2}} + \vt^\beta\right).	
		\nonumber
	\end{align}
Finally, suppose the initial/boundary data belong to the class 	
\[
\vr_0 > 0,\ \vt_0 > 0,\ \intO{\left(\frac{1}{2} \vr_0 |\vu_0|^2 + \vr_0 e(\vr_0, \vt_0) - \vtB \vr_0 s(\vr_0, \vt_0)\right)} < \infty,
\]
\[
\inf_{\Omega} \vt_0 \geq \underline{\vt} > 0, 
\inf_{\partial \Omega} \vtB \geq \underline{\vt} > 0, 
\]	
and
\[
\vc{g} \in L^\infty((0,T) \times \Omega; R^3).
\]

Then the Navier--Stokes--Fourier system \eqref{i1}--\eqref{i3}, 
with the boundary conditions \eqref{i4} (with $\vuB = 0$), and the 
initial conditions \eqref{i5} admits a weak solution 
$(\vr, \vt, \vu)$ in $(0,T) \times \Omega$ in the sense specified 
in Section \ref{WS}. Moreover, there exists $M > 0$ depending 
only on the structural constants $(\underline{\eta}, \underline{\kappa}, \Ov{\kappa}, \Ov{P})$ such that 
\begin{equation} \label{us}
\vt (t, x) \geq \mathcal{K}^{-1} \left(\left( \frac{2}{\mathcal{K}(\underline{\vt})} + M t \right)^{-1} \right) \ \mbox{for a.a.}\ (t,x) \in (0,T) \times \Omega, 
\ \mathcal{K}(\vt) = \int_0^\tau \kappa (s) \ \D s.
\end{equation}
	\end{Theorem}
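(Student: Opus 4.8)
The plan is to couple the existence theory of \cite[Chapter 12]{FeiNovOpen} with the a priori minimum principle of Proposition \ref{TT1}, transferring the latter along the approximation scheme \eqref{p20}--\eqref{p22} and then passing to the limit. \emph{First,} I would check that the hypotheses of the theorem imply all the structural assumptions of Proposition \ref{TT1}: \eqref{T2} follows by integrating $\kappa(\vt)\approx\vt^{\frac12}+\vt^\beta$ (so that $\mathcal{K}(\vt)\approx\vt^{\frac32}+\vt^{\beta+1}\approx\vt\kappa(\vt)$); \eqref{T3} holds with $\underline{e}=\frac13$, $\Ov{e}=\frac23$ because $a\ge0$; \eqref{T4}, in the form \eqref{p16}, follows from the upper bound on $\frac53P(Z)-P'(Z)Z$ in \eqref{p32}; and the crucial condition \eqref{cond} follows by comparing $\kappa(\vt)\eta(\vt)\ageq(\vt^{\frac12}+\vt^\beta)\min\{1,\vt\}$ with $\vr c_v(\vr,\vt)\aleq\vt^{\frac32}+\vt^3$ and using $\beta>6$, exactly as recorded in \eqref{p14}--\eqref{p19}. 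Consequently the constant $M$ furnished by Proposition \ref{TT1} depends only on the stated structural constants.

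\emph{Next,} I would run the multilevel scheme \eqref{p20}--\eqref{p22} of \cite[Chapter 12]{FeiNovOpen} (Galerkin approximation of \eqref{p21}, then $\ep\to0$, then $\delta\to0$) and apply the minimum principle at a fixed level of the approximation, where all quantities entering \eqref{p22} — the density $\vr$, which solves the nondegenerate parabolic problem \eqref{p20} and is positive, the velocity $\vu$, and the temperature $\vt$ — enjoy the regularity required by Proposition \ref{TT1} (at the Galerkin level they are in fact smooth). The only term of \eqref{p22} not present in that proposition is $\ep|\Grad\vr|^2\big(2\frac{\partial e}{\partial\vr}+\vr\frac{\partial^2 e}{\partial\vr^2}\big)$; since
\[
2\frac{\partial e(\vr,\vt)}{\partial\vr}+\vr\frac{\partial^2 e(\vr,\vt)}{\partial\vr^2}=\frac{\partial}{\partial\vr}\Big(e+\vr\frac{\partial e}{\partial\vr}\Big)=\frac{\partial}{\partial\vr}\Big(\tfrac32\vt P'(Z)\Big)=\tfrac32\,\vt^{-\frac12}P''(Z)\ge0,\qquad Z=\frac{\vr}{\vt^{\frac32}},
\]
by convexity of $P$, adding it to the right-hand side only strengthens the differential inequality; repeating the derivation of \eqref{T6}--\eqref{T8} for \eqref{p22} therefore yields \eqref{T8} with the same $M$ and $\Theta(0,\cdot)=\mathcal{K}(\vt_0)$, $\Theta|_{\partial\Omega}=\mathcal{K}(\vtB)$. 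Applying the comparison argument of Proposition \ref{TT1} with the subsolution \eqref{T9}, and using that $\mathcal{K}$ is increasing together with $\mathcal{K}(\vt_0)\ge\mathcal{K}(\underline{\vt})$ and $\mathcal{K}(\vtB)\ge\mathcal{K}(\underline{\vt})$ (cf. \eqref{p1}), I obtain
\[
\vt_{\ep,\delta}(t,x)\ge\mathcal{K}^{-1}\Big(\big(\tfrac{2}{\mathcal{K}(\underline{\vt})}+Mt\big)^{-1}\Big)\qquad\text{for a.a. }(t,x)\in(0,T)\times\Omega,
\]
uniformly in $\ep,\delta$.

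\emph{Finally,} I would pass to the limit $\ep\to0$, $\delta\to0$ as in \cite[Chapter 12]{FeiNovOpen}: the approximate entropy balance \eqref{p24} and the ballistic energy inequality \eqref{p30}, shown above to have the same structure as their counterparts in \cite[Chapter 6]{FeiNovOpen}, supply the uniform bounds, and the standard compactness argument produces (a subsequence converging to) a weak solution $(\vr,\vt,\vu)$ in the sense of Section \ref{WS}, with $\vt_{\ep,\delta}\to\vt$ a.a. in $(0,T)\times\Omega$; the a priori positivity of $\vt_{\ep,\delta}$ makes the modification $\kappa\approx\vt^{\frac12}+\vt^\beta$ of \eqref{p14} harmless for this passage. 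Since the right-hand side of the displayed lower bound is independent of $\ep,\delta$, it is inherited by $\vt$, which is precisely \eqref{us}; in particular $\vt>0$ a.a.

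The main obstacle will be the second step: to invoke Proposition \ref{TT1} at the approximate level one needs the temperature to be a genuine strong solution of \eqref{p22} in the class $\vt\in C([0,T]\times\Ov\Omega)$, $\partial_t\vt\in L^2((0,T)\times\Omega)$, $\vt\in L^2(0,T;W^{2,2}(\Omega))$, $\vt>0$, with continuous $\vr>0$, $\vu$, $\Grad\vu$, which has to be extracted from the parabolic regularity theory underlying the construction in \cite[Chapter 12]{FeiNovOpen}; the sign condition \eqref{p31}, i.e. the convexity of $P$, is exactly what renders the artificial-viscosity source in \eqref{p22} compatible with the minimum principle, and without it the argument would break down already at this level.
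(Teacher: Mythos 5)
Your proposal is correct and follows essentially the same route as the paper: verify that the hypotheses of Theorem \ref{TP1} imply the structural conditions \eqref{T2}--\eqref{T4} and \eqref{cond} of Proposition \ref{TT1}, observe that the extra artificial-viscosity term in \eqref{p22} has a favourable sign precisely because $2\partial_\vr e + \vr \partial^2_\vr e = \tfrac{3}{2}\vt^{-1/2}P''(Z) \geq 0$ by convexity of $P$, apply the minimum principle uniformly at the approximate level, and pass to the limit $\ep \to 0$, $\delta \to 0$ as in \cite[Chapter 12]{FeiNovOpen}. Your closing remark about the regularity needed to invoke Proposition \ref{TT1} at the approximate level correctly identifies the one point the paper also leaves to the cited construction.
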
 

\begin{proof}
	
	The uniform estimate \eqref{us} at the level of the approximate solutions satisfying the scheme \eqref{p20}--\eqref{p22} follows from Proposition \ref{TT1}. The successive limits $\ep \to 0$, $\delta \to 0$ that give rise to a weak solution can be performed in the same way as in \cite[Chapter 12]{FeiNovOpen}.
	
	\end{proof}
	
\begin{Remark} \label{rrr1}
An iconic example of $P$ satisfying \eqref{p32} is 
\[
P(Z) = \left\{  \begin{array}{l} Z , \ \mbox{if}\ Z \leq \Ov{Z}, \\ A Z^{\frac{5}{3}} + B \ \mbox{if} \ Z \geq \Ov{Z},  \end{array}\right. , 
\Ov{Z} > 0,\ A = \frac{3}{5} {\Ov{Z}}^{- \frac{2}{3}},\ B = \frac{2}{5} \Ov{Z}.
\]	
Roughly speaking, the gas obeys the standard Boyle--Mariotte law in the non--degenerate area $\frac{\vr}{\vt^{\frac{3}{2}}} = Z \leq \Ov{Z}$, 
while it behaves like a Fermi gas dominated by free electrons in the degenerate area $Z > \Ov{Z}$.
	\end{Remark}	
	
\section{Concluding remarks}
\label{C}

In the analysis of the approximate system \eqref{p20}--\eqref{p22}, 
it is convenient to work with smooth functions, in particular the 
function $P$ determining the pressure, internal energy, and entropy 
should be smooth. We show that smoothing does not perturb considerably
the structural properties of $P$, in particular, the constants $p_\infty$ and $\Ov{P}$. Indeed consider 
\[
P_\delta (Z) = \int_{Z- \delta}^{Z + \delta} P(y) \zeta(y - Z) \ \D y,\ Z \geq \delta, 
\]
where $\zeta$ is a regularizing kernel, meaning 
\[
\zeta \in C^\infty(R),\ \zeta \geq 0,\ \zeta (s) = \zeta(-s), 
{\rm supp}[\zeta] \subset [-\delta, \delta],\ \int_R \zeta (s) \ \D s = 1.
\]

\begin{itemize}
	\item As regularization commutes with differentiation, we get 
\[
P(Z) \ \mbox{convex} \ \Rightarrow \ P_\delta(Z) \ \mbox{convex} 
\ Z \geq \delta.
\]	

\item As $P$ is non--decreasing, we have  
\[
P(Z- \delta) \leq P_\delta(Z) \leq P(Z + \delta), 
\]
in particular 
\[
\frac{P(Z)}{Z^{\frac{5}{3}}} \to p_\infty 
\ \Rightarrow \ \frac{P_\delta(Z)}{Z^{\frac{5}{3}}} \to p_\infty.
\]

\item 
We have 
\[
\frac{5}{3} P_\delta(Z) - P_{\delta}'(Z) Z = 
\frac{5}{3} P_\delta(Z) - [P'(Z) Z]_{\delta} 
+ [P'(Z) Z]_{\delta}  - P_{\delta}'(Z) Z.
\]
Furthermore, as $P$ is convex, 
\begin{align} 
[P'(Z) Z]_{\delta}  - P_{\delta}'(Z) Z &= 
\int_{Z - \delta}^{Z + \delta} P'(y) (y - Z) \zeta (y- Z) \ \D y \br 
&\geq \int_{Z - \delta}^{Z + \delta} (P(y) - P(Z)) \zeta (y- Z) \ \D y
\br&= \int_{Z - \delta}^{Z + \delta} P(y) \zeta (y- Z) \ \D y - 
P \left( \int_{Z - \delta}^{Z + \delta} y \zeta (y- Z) \ \D y  \right)
\geq 0,
\nonumber
	\end{align}
where the last inequality is Jensen's inequality. 
We have shown 
\[
\frac{5}{3} P(Z) - P'(Z) Z > 0 \ \Rightarrow 
\frac{5}{3} P_\delta(Z) - P_{\delta}'(Z) Z > 0.
\]

\item 
Finally, suppose 
\begin{equation} \label{konec}
\frac{5}{3} P(Z) - P'(Z) Z \leq \Ov{P}. 
\end{equation}
Similarly to the above, we have 
\begin{align}
&
\frac{5}{3}  P_\delta(Z) - P'_\delta(Z) Z = 
\frac{5}{3} P_\delta(Z) - [P'(Z) Z]_{\delta} 
+ [P'(Z) Z]_{\delta}  - P_{\delta}'(Z) Z \br &\leq 
K + \int_{Z - \delta}^{Z + \delta} P'(y) (y - Z) \zeta (y- Z) \ \D y
= K + \int_{Z - \delta}^{Z + \delta} (P'(y) - P'(Z)) (y - Z)) \zeta (y- Z) \ \D y,
\nonumber
\end{align}
where
\begin{align} 
\int_{Z - \delta}^{Z + \delta} (P'(y) - P'(Z)) (y - Z)) \zeta (y- Z) \ \D y \leq \delta \int_{Z-\delta}^{Z + \delta} P''(s)\ \D s
\nonumber
\end{align}

On the one hand, obviously, 
\[
\int_{Z - \delta}^{Z + \delta} (P'(y) - P'(Z)) (y - Z)) \zeta (y- Z) \ \D y \leq \delta \sup_{Z \leq \Ov{Z}} P'(Z) \ \mbox{for}\ 
Z \leq \Ov{Z}.
\]
On the other hand,  we may suppose
\[
{\rm ess} \limsup_{Z \to \infty } {P''(Z)} < \infty, 
\]
which does not seem very restrictive in view of \eqref{konec}. 
Then we can conclude 
\[
\frac{5}{3} P(Z) - P'(Z) Z \leq \Ov{P} \ \Rightarrow\ 
\frac{5}{3} P_\delta(Z) - P'_\delta(Z) Z \leq \Ov{P} + O(\delta),
\ O(\delta) \to 0 \ \mbox{as}\ \delta \to 0.
\]

	\end{itemize}

%\bibliography{citace}
%\bibliographystyle{plain}

\def\cprime{$'$} \def\ocirc#1{\ifmmode\setbox0=\hbox{$#1$}\dimen0=\ht0
	\advance\dimen0 by1pt\rlap{\hbox to\wd0{\hss\raise\dimen0
			\hbox{\hskip.2em$\scriptscriptstyle\circ$}\hss}}#1\else {\accent"17 #1}\fi}

\end{document}